\theoremstyle{plain}
\newtheorem{theorem}{Theorem}
\newtheorem{lemma}{Lemma}
  \def\Im{\mathop{\rm Im}\nolimits}
\title{Value range of solutions to the chordal Loewner equation with restriction on the driving function}
\author{A. V. Zherdev}
\thanks{This work was supported by the Russian Science Foundation (project no.~17-11-01229)}
\subjclass[2010]{30C55}
\keywords{Value range, Loewner equation, Hamilton function, Pontryagin maximum principle}
\begin{document}
\maketitle \markright{Value range of solutions to the chordal Loewner equation}

\begin{abstract}
We consider a value range $\{g(i,T)\}$ of solutions to the chordal Loewner equation with the restriction $|\lambda(t)| \le c$ on the driving function. We use reachable set methods and the Pontryagin maximum principle.
\end{abstract}

{\bf 1. Introduction. } 
Problems of finding value ranges $\{f(z_0)\}$ are typical for the geometric function theory. Here functions $f$ are taken from some class of analytic functions and $z_0$ is a fixed point in the domain of functions from that class.

A number of problems of this kind have been solved for classes of analytic functions defined in the unit disk $\mathds{D}=\{z: |z|<1\}$. Rogosinski \cite{Rog34} gave a description of the value range $\{f(z_0)\}$ for the class of all analytic functions mapping the unit disk $\mathds{D}$ into itself, $f(0) = 0,\, f'(0) \ge 0$. Grunsky \cite{Grunsky32} described the value range $\{\log(f(z_0)/z_0) : f \in \mathcal{S}\},\, z_0 \in \mathds{D}$ within the class $\mathcal{S}$ of univalent analytic functions $f$ in $\mathds{D}$, $f(0) = 0,\, f'(0) = 1$. Goryainov and Gutlyanski \cite{GG76} extended this result by describing the set $\{\log(f(z_0)/z_0) : f \in \mathcal{S}_M\}$ for the subclass $\mathcal{S}_M = \{f \in \mathcal{S} : |f| \le M \}$ of bounded functions.

Roth and Schleissinger \cite{RS14} described the value range $\{f(z_0)\}$ for all analytic univalent functions $f:\mathds{D}\to\mathds{D}$, $f(0) = 0,\, f'(0) > 0$, that is, they obtained an analogue of Rogosinski's result for univalent functions. In the same article they found a description of the set $\{g(z_0)\}$ within the class of all univalent analytic functions $g:\mathds{H}\to\mathds{H}$, mapping the upper half-plane $\mathds{H}=\{z: \Im z > 0\}$ into itself and normalized $g(z) = z + cz^{-1} + O(|z|^{-2}),\, z\to \infty$. Value ranges for some classes of analytic univalent functions defined in $\mathds{D}$ were described in \cite{KS16, KS17}.

Denote $\mathcal{H}(T),\, T > 0$ the class of all analytic univalent functions $g:\mathds{H} \backslash K \to \mathds{H}$, normalized near infinity as
$$g(z) = z + \frac{2T}{z} + O(|z|^{-2}).$$ Here $K \subset \mathds{H}$ is a so-called hull, which means that $K=\mathds{H}\cap\overline{K}$
and $\mathds{H}\backslash K$ is simply connected. Solutions of the chordal Loewner differential equation
\begin{equation}
\label{Zher_eq1}\frac{dg(z, t)}{dt} = \frac{2}{g(z,t) - \lambda(t)},\, g(z,0) = z,\, t \geq 0,
\end{equation}
where $\lambda(t)$ is a real-valued continuous function, form a dense subclass of $\mathcal{H}(T)$. We call $\lambda(t)$ the driving function of the chordal Loewner equation \eqref{Zher_eq1}. Thus, the problem of finding the value range $\{g(z_0): g \in \mathcal{H}(T)\},\ z_0 \in \mathds{H}$,
is equivalent to describing the set $\{g(z_0, T)\}$ of attainability of the equation \eqref{Zher_eq1}. Without loss of generality we can put $z_0 = i$. The set
$$D(T) = \{g(i, T): g \text{ solution \eqref{Zher_eq1}}\}$$
has been described by Prokhorov and Samsonova in \cite{ProhorovSamsonova15} using the Pontryagin maximum principle. They proved the following theorems.

\begin{theorem}\cite{ProhorovSamsonova15} The domain $D(T),\, 0 < T \le \tfrac{1}{4}$, is bounded by two curves $l_1$ and $l_2$ connecting the points $i$ and $i\sqrt{1-4T}$. 
The curve $l_1$ in the complex $(x,y)$-plane is parameterized by the equations
$$x(T) = \frac{C_{0}^{2}(\varphi, T)(4T-1)+(1-\sin\varphi)^2}{2C_{0}(\varphi, T)\cos\varphi},\,
y(T) = \frac{1-\sin\varphi}{C_{0}(\varphi, T)},\, -\frac{\pi}{2} < \varphi < \frac{\pi}{2},$$
where $C_{0}(\varphi, T)$ is the unique root of the equation
$$2\cos^{2}\varphi\log(1-\sin\varphi) + (1-\sin\varphi)^2 = 2\cos^2\varphi\log C + C^2(1 - 4T).$$
The curve $l_2$ is symmetric to $l_1$ with respect to the imaginary axis.
\end{theorem}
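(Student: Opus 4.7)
The plan is to view \eqref{Zher_eq1} as a real two-dimensional control system and characterize the boundary of the reachable set via the Pontryagin maximum principle. Writing $g(i,t) = x(t) + iy(t)$ and separating real and imaginary parts gives
$$\dot x = \frac{2(x-\lambda)}{(x-\lambda)^2+y^2}, \qquad \dot y = -\frac{2y}{(x-\lambda)^2+y^2}, \qquad x(0)=0,\ y(0)=1,$$
with measurable control $\lambda\colon[0,T]\to\mathbb{R}$. Since the drift is smooth on $\{y>0\}$ and standard half-plane capacity estimates bound $|g(i,T)-i|$ uniformly, $D(T)$ is bounded and every point of $\partial D(T)$ is hit by a Pontryagin extremal trajectory maximizing some linear functional $\alpha x(T)+\beta y(T)$.

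Next I would introduce the adjoint vector $\psi=(\psi_1,\psi_2)$ and the Pontryagin Hamiltonian
$$H(x,y,\psi,\lambda)=\frac{2\psi_1(x-\lambda)-2\psi_2 y}{(x-\lambda)^2+y^2}.$$
Because the control is unconstrained, optimality forces $\partial H/\partial\lambda=0$, which reduces to the algebraic relation
$$\psi_1\bigl(y^2-(x-\lambda)^2\bigr)+2\psi_2 y(x-\lambda)=0.$$
This lets one express $\lambda$ rationally in terms of $(x,y,\psi_1,\psi_2)$. The adjoint equations $\dot\psi=-\partial H/\partial(x,y)$, together with the autonomy of the system which yields the first integral $H\equiv h_0$, reduce the four-dimensional Hamiltonian system to quadratures; after substituting the optimal $\lambda$, one also finds that $\psi$ may be taken with a specific one-parameter normalization, so extremals form a one-parameter family.

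I would parametrize this family by an angle $\varphi\in(-\pi/2,\pi/2)$ encoding the direction of the terminal outward normal to $\partial D(T)$ (i.e.\ the direction of $\psi(T)$). Integrating the reduced system from $0$ to $T$ should produce closed-form expressions for $x(T)$ and $y(T)$ in terms of $\varphi$ and an auxiliary constant of integration $C=C_0(\varphi,T)$. The implicit equation for $C_0$ is then nothing but the matching of the conserved quantity between $t=0$ (where $y=1$, $x=0$, and the initial adjoint is determined by $\varphi$) and $t=T$; the logarithmic term $2\cos^2\varphi\,\log C$ is exactly the signature of integrating $\dot y$ against the combination dictated by the optimality condition, which is separable in $y$ and produces $d(\log y)$. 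Uniqueness of the relevant root follows by monotonicity of the right-hand side in $C>0$ on the interval consistent with $y(T)\le 1$. The endpoints $\varphi\to\pm\pi/2$ correspond to the degenerate extremals $\lambda\equiv 0$ and $T\to 0$, landing at $i\sqrt{1-4T}$ and $i$ respectively.

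The main obstacle will be performing the explicit integration cleanly enough to obtain the stated closed form: one must show that along each optimal trajectory $\lambda$ is essentially constant (or becomes constant after an appropriate time reparametrization), which is what makes the quadratures elementary and produces the displayed rational-plus-logarithmic structure. Once $l_1$ is established, the curve $l_2$ comes for free: the involution $\lambda\mapsto-\lambda$ is a symmetry of the dynamics sending $(x,y)\mapsto(-x,y)$, so the reflected family of extremals sweeps out the mirror image of $l_1$ across the imaginary axis, giving the other boundary arc.
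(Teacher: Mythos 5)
Your overall strategy coincides with the one the paper actually follows (and with Prokhorov--Samsonova's original argument): pass to the real phase system \eqref{Zher_eq2}, apply the Pontryagin maximum principle with unconstrained control, combine the stationarity condition $\partial H/\partial\lambda=0$ with the first integrals of the Hamiltonian system to reduce the boundary extremals to quadratures, and obtain $l_2$ from the symmetry $(x,\lambda)\mapsto(-x,-\lambda)$. The paper records the resulting curve in the Cartesian form $2X^2=\log Y\,(1-4T-Y^2)$ of Theorem~3; the $\varphi$-parametrization of the statement is recovered from \eqref{Zher_eq13} by setting $p=\cos\varphi/C_0$ and $Y=(1-\sin\varphi)/C_0$, with the transcendental equation for $C_0$ being exactly the first relation in \eqref{Zher_eq13} cleared of denominators. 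So the route is the right one.

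However, the step you defer as ``the main obstacle'' is the crux of the proof, and in the form you state it it would fail. Along an extremal the control is \emph{not} constant, nor does it become constant after a time reparametrization: by \eqref{Zher_eq5} the maximizing control is $\lambda_0=x-\frac{y\Psi_1}{\sqrt{\Psi_1^2+\Psi_2^2}-\Psi_2}$, and the quantity that is conserved is the \emph{difference} $x-\lambda_0$, not $\lambda_0$ itself. This is precisely the paper's Lemma~1: after substituting $\lambda_0$, the adjoint system gives $\dot\Psi_1=0$, conservation of the Hamiltonian gives $H=\frac{\sqrt{\Psi_1^2+\Psi_2^2}-\Psi_2}{y}\equiv c_2$, and therefore $x-\lambda_0=\Psi_1/H\equiv p$ is constant while $\lambda_0(t)=x(t)-p$ drifts with the trajectory. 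It is this conserved combination that collapses \eqref{Zher_eq2} to the separable equations \eqref{Zher_eq6}--\eqref{Zher_eq7} and produces the logarithmic terms in the answer; without isolating it, your ``reduction to quadratures'' is unjustified. Two smaller inaccuracies: the stationarity equation is a quadratic in $x-\lambda$ whose two roots give a maximum and a minimum of $H$, so you must select the maximizing root \eqref{Zher_eq5} (the dependence is not rational -- it involves $\sqrt{\Psi_1^2+\Psi_2^2}$); and at fixed $T$ the endpoint $i$ of $l_1$ is attained in the limit $p\to\infty$ (equivalently $\varphi\to\pi/2$, where $\lambda(0)=-p\to-\infty$), not as $T\to 0$.
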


\begin{theorem}\cite{ProhorovSamsonova15} The domain $D(T),\, T > \tfrac{1}{4}$, is bounded by two curves $l_1 = l_{11} \cup l_{12}$ and $l_2$
which is symmetric to $l_1$ with respect to the imaginary axis, $l_1$, $l_2$ have the mutual endpoint $i$. 
The curve $l_{11}$ in the complex $(x,y)$-plane is parameterized by the equations
$$x(T) = \frac{C_{0}^{2}(\varphi, T)(4T-1)+(1-\sin\varphi)^2}{2C_{0}(\varphi, T)\cos\varphi},\,
y(T) = \frac{1-\sin\varphi}{C_{0}(\varphi, T)},$$
where $\varphi_0(T) < \varphi < \frac{\pi}{2}$. The curve $l_{12}$ is parameterized by the equations
$$x(T) = \frac{C_{00}^{2}(\varphi, T)(4T-1)+(1-\sin\varphi)^2}{2C_{00}(\varphi, T)\cos\varphi},\,
y(T) = \frac{1-\sin\varphi}{C_{00}(\varphi, T)},$$
$\varphi_0(T) < \varphi < \frac{\pi}{2}$. Here $C_{0}(\varphi, T) > 0$ and $C_{00}(\varphi, T) > 0$ are the minimal and maximal roots of the equation
$$2\cos^{2}\varphi\log(1-\sin\varphi) + (1-\sin\varphi)^2 = 2\cos^2\varphi\log C + C^2(1 - 4T),$$
respectively, $\varphi_0(T) \in (-\frac{\pi}{2}, \frac{\pi}{2})$ is the unique solution of the equation
$$\log\frac{1-\sin\varphi}{1+\sin\varphi} + \frac{1-\sin\varphi}{1+\sin\varphi} + 1 = -\log(4T-1).$$
\end{theorem}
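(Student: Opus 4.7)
The overall strategy mirrors that of Theorem~1: we view the description of $D(T)$ as a reachable-set problem at time $T$ for the control system obtained by splitting \eqref{Zher_eq1} into real and imaginary parts,
\begin{align*}
\dot x &= \frac{2(x-\lambda)}{(x-\lambda)^{2}+y^{2}}, & x(0)&=0,\\
\dot y &= -\frac{2y}{(x-\lambda)^{2}+y^{2}}, & y(0)&=1,
\end{align*}
with continuous control $\lambda(t)\in\mathds{R}$. Boundary points of $D(T)$ are supporting points of linear functionals $ax(T)+by(T)$, hence endpoints of extremals selected by the Pontryagin maximum principle. The symmetry $\lambda\mapsto-\lambda$ of the system reflects $D(T)$ in the imaginary axis, which yields $l_2$ from $l_1$, so we concentrate on extremals generating $l_1$.

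Form the Hamiltonian
$$H=\frac{2\psi_{1}(x-\lambda)-2\psi_{2}y}{(x-\lambda)^{2}+y^{2}}.$$
Since the control is unrestricted, the maximum principle reduces to $\partial H/\partial\lambda=0$, giving the algebraic optimality relation $\psi_{1}((x-\lambda)^{2}-y^{2})=2\psi_{2}(x-\lambda)y$. Coupling this with the adjoint system $\dot\psi_{i}=-\partial H/\partial(x,y)_{i}$ yields a closed extremal flow which admits two first integrals: $H$ itself (autonomy) and a second conserved quantity produced by combining the optimality relation with the adjoint equations. An angular substitution of the type used by Prokhorov and Samsonova, with an auxiliary parameter $\varphi$ naturally attached to the direction of $(x-\lambda,y)$, then reduces the extremal system to a separable scalar ODE for a positive function $C(t)$. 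Integrating with the initial data fixed by $g(i,0)=i$ and evaluating at $t=T$ produces the implicit relation
$$2\cos^{2}\varphi\log(1-\sin\varphi)+(1-\sin\varphi)^{2}=2\cos^{2}\varphi\log C+C^{2}(1-4T)\qquad(\ast)$$
together with the stated parametric formulas for $(x(T),y(T))$.

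The new feature for $T>\tfrac14$ is that the coefficient $1-4T$ in $(\ast)$ is negative, so the right-hand side $F(C)=2\cos^{2}\varphi\log C+C^{2}(1-4T)$ tends to $-\infty$ both as $C\to 0^{+}$ and as $C\to\infty$ and attains its unique maximum at $C_{*}^{2}=\cos^{2}\varphi/(4T-1)$. Consequently $(\ast)$ has $0$, $1$ or $2$ positive roots according to the sign of $\mathrm{LHS}-F(C_{*})$. A direct computation of $F(C_{*})$ shows that the borderline equality $\mathrm{LHS}=F(C_{*})$, after division by $(1-\sin\varphi)(1+\sin\varphi)$, reduces precisely to
$$\log\frac{1-\sin\varphi}{1+\sin\varphi}+\frac{1-\sin\varphi}{1+\sin\varphi}+1=-\log(4T-1),$$
whose left-hand side is strictly monotone on $(-\pi/2,\pi/2)$ with full range $\mathds{R}$; hence $\varphi_{0}(T)$ exists and is unique for each $T>\tfrac14$. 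For $\varphi\in(\varphi_{0},\pi/2)$ the two roots $C_{0}<C_{00}$ give two distinct extremal endpoints, producing the arcs $l_{11}$ and $l_{12}$; they coalesce at $\varphi=\varphi_{0}$ where $C_{0}=C_{00}=C_{*}$, and a limit analysis as $\varphi\uparrow\pi/2$ shows that both branches terminate at the common point $i$.

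The main obstacle will be to show that these extremals sweep out the \emph{entire} boundary of $D(T)$, not merely a subset. This requires closedness of the reachable set (a Filippov-type compactness argument for the Loewner control system), necessity of the maximum principle at support points, and a careful exclusion of abnormal or secondary extremals on the range $\varphi\le\varphi_{0}$ where $(\ast)$ has no admissible root. A concluding continuity and transversality check will be needed to verify that $l_{11}$ and $l_{12}$ glue into a single $C^{0}$ arc meeting its mirror image $l_{2}$ only at $i$, so that $l_{1}\cup l_{2}$ actually bounds a connected domain coinciding with $D(T)$.
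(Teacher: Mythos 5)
First, a point of reference: the paper does not prove this theorem --- it is quoted from Prokhorov and Samsonova \cite{ProhorovSamsonova15} --- so there is no internal proof to match. The closest the paper comes is Theorem 3, where Lemma 1 (constancy of $p=\Psi_1 y/(\sqrt{\Psi_1^2+\Psi_2^2}-\Psi_2)$ together with the reduced equations \eqref{Zher_eq6}--\eqref{Zher_eq7}) is integrated to give $\partial D(T)$ in the Cartesian form \eqref{Zher_eq12} for all $T>0$. Your plan instead follows the route of the original source: the same Pontryagin extremal flow, but written in angular variables $(\varphi,C)$. The two descriptions are equivalent under $Y=(1-\sin\varphi)/C$, $p=\cos\varphi/C$, which turns \eqref{Zher_eq13} into your relation $(\ast)$ and into the stated formulas for $x(T)$, $y(T)$. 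What the angular route buys is precisely the content of the theorem for $T>\tfrac14$: your computation $C_*^2=\cos^2\varphi/(4T-1)$, the $0/1/2$-root trichotomy for $(\ast)$, and the reduction of the borderline equality $\mathrm{LHS}=F(C_*)$ --- after division by $\cos^2\varphi=(1-\sin\varphi)(1+\sin\varphi)$ --- to the equation defining $\varphi_0(T)$ all check out, and the strict monotonicity of $u\mapsto\log u+u+1$ with $u=(1-\sin\varphi)/(1+\sin\varphi)\in(0,\infty)$ does give existence and uniqueness of $\varphi_0(T)$. The Cartesian form \eqref{Zher_eq12} hides this two-branch structure, so your parameterization is genuinely better adapted to the $T>\tfrac14$ statement.

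That said, what you have written is a plan rather than a proof, and two steps are only asserted. (i) The derivation of $(\ast)$ and of the parametric formulas from the ``angular substitution'' is not carried out; this is the computational core, and it is exactly where the two conserved quantities (in the paper's notation, $\Psi_1\equiv c_1$ and $H\equiv c_2$, whence $p=c_1/c_2$) must be produced explicitly --- i.e., the content of Lemma 1 plus the change of variables above. (ii) Your final paragraph correctly identifies, but does not close, the completeness question: that every boundary point of $D(T)$ is delivered by such an extremal, that abnormal extremals are excluded, and that $l_{11}\cup l_{12}$ together with its mirror image exhausts $\partial D(T)$. Neither gap is fatal --- both are filled in \cite{ProhorovSamsonova15} --- but both must be supplied before the argument stands on its own.
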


Continuing this research we consider a problem of describing the value range
$$D_c(T) = \{g(i, T): g \text{ solution \eqref{Zher_eq1}},\ |\lambda(t)| \le c\},$$
that is, we added the restriction $|\lambda(t)| \le c$ on the driving function, which is piecewise continuous on $\mathds{R}$. We use the Pontryagin maximum principle as the main tool of the research. See \cite{Pro90, Pro93} for reachable set methods developed for the radial Loewner differential equation.

{\bf 2. Preliminary Statements.}
 Due to a well known property of the Loewner equation \eqref{Zher_eq1} (see, for example, \cite{Kag04}) and symmetry of the restriction $|\lambda(t)| \le c$, the domain $D_c(T)$ is symmetric with respect to the imaginary axis. Therefore, we can consider only the right half $(x \ge 0)$ of the domain.

Putting $z=i$ in the Loewner differential equation \eqref{Zher_eq1} and splitting the result equation into real and imaginary parts we obtain the system
of ordinary differential equations

\begin{equation}
\label{Zher_eq2}
  \begin{aligned}
     \frac{dx}{dt} &= \frac{2(x-\lambda)}{(x-\lambda)^2 + y^2},\,&x(0)=0,\\
     \frac{dy}{dt} &= -\frac{2y}{(x-\lambda)^2 + y^2},\,&y(0)=1.
  \end{aligned}
\end{equation}
Following the Pontryagin maximum principle formalism we introduce an adjoint vector $\Psi(t)=(\Psi_1(t), \Psi_2(t)) \ne 0$ and the Hamilton function
\begin{equation}
\label{Zher_eq3} H(x,y,\Psi_1, \Psi_2, \lambda) = \frac{2(x-\lambda)\Psi_1 - 2y\Psi_2}{(x-\lambda)^2 + y^2}.
\end{equation}
The adjoint vector satisfies the system
\begin{equation}
\begin{aligned}
\label{Zher_eq4} 
\frac{d\Psi_1}{dt} &= -\frac{\partial H}{\partial x} = 
\frac{2}{((x-\lambda)^2 + y^2)^2}[((x-\lambda)^2 - y^2)\Psi_1 - 2(x-\lambda)y\Psi_2],\\
\frac{d\Psi_2}{dt} &= -\frac{\partial H}{\partial y} = 
\frac{2}{((x-\lambda)^2 + y^2)^2}[(2(x-\lambda)y\Psi_1 + ((x-\lambda)^2 - y^2)\Psi_2].
\end{aligned}
\end{equation}
The domain $D_c(T)$ is a set of attainability for the phase system \eqref{Zher_eq2} at $t=T$.
A boundary point $A=x_A(T) + iy_A(T)$ of $D_c(T)$ is delivered by the solution $(x_A(t), y_A(t))$ of the Hamiltonian system \eqref{Zher_eq2}-\eqref{Zher_eq4} with the driving
function $\lambda_A(t)$ satisfying the Pontryagin maximum principle

\begin{equation*}
\begin{aligned}
\max\limits_{\lambda \in [-c, c]}H(x_A(t), y_A(t), &\Psi_1^A(t), \Psi_2^A(t), \lambda) \\
&= H(x_A(t), y_A(t), \Psi_1^A(t), \Psi_2^A(t),\lambda_A(t))
\end{aligned}
\end{equation*}
at continuity points of $\lambda_A(t)$. Note that
$$\lim_{\lambda \to \infty}H(x, y, \Psi_1, \Psi_2, \lambda) = \lim_{\lambda \to -\infty}H(x, y, \Psi_1, \Psi_2, \lambda) = 0$$
for any fixed values of $x, y, \Psi_1, \Psi_2$. Therefore, the maximum of $H$ is attained at zeros of the derivative of $H$ with respect to $\lambda$
$$\frac{\partial H(x, y, \Psi_1, \Psi_2, \lambda)}{\partial\lambda} = 2\frac{((x-\lambda)^2-y^2)\Psi_1 - 2(x-\lambda)y\Psi_2}{(x-\lambda)^2 + y^2}.$$
It is not difficult to show that $H$ has only one local maximum on $\mathds{R}$ for any fixed values of $x, y, \Psi_1, \Psi_2$ at
\begin{equation}
\label{Zher_eq5}
\lambda_0 = x-\frac{y\Psi_1}{\sqrt{\Psi_{1}^{2} + \Psi_{2}^{2}} - \Psi_2}.
\end{equation}
Therefore, $H$ attains its maximum on the interval $[-c, c]$ either at $\lambda_0$ if $\lambda_0 \in [-c, c]$, or at one of the endpoints of the
interval, otherwise.

We formulate the following lemma providing differential equations for the phase trajectory $(x(t), y(t))$ in the case when $\lambda_0 \in [-c, c]$.

\begin{lemma}
Let $\lambda(t)$ maximize the Hamilton function \eqref{Zher_eq3} on $\mathds{R}$ for $t \in [t_1, t_2] \subset [0, T]$, that is,
$$\max\limits_{\lambda \in \mathds{R}}H(x(t), y(t), \Psi_1(t), \Psi_2(t), \lambda) = H(x(t), y(t), \Psi_1(t), \Psi_2(t),\lambda(t)),$$
where $(x(t), y(t))$ is a solution of the phase system \eqref{Zher_eq2} and $(\Psi_1(t), \Psi_2(t))$ is a solution of the adjoint system \eqref{Zher_eq4}. Then

(a) $\frac{\Psi_1 y}{\sqrt{\Psi_1^2 + \Psi_2^2}-\Psi_2} \equiv const = p,\, t\in [t_1,\, t_2],$

(b) $\lambda(t) = x(t) - p,\, t\in [t_1,\, t_2],$

(c) the phase trajectory $(x(t), y(t))$ satisfies the following differential equations
\begin{equation}
\label{Zher_eq6}
\frac{dy}{dt} = -\frac{2 y}{p^2 + y^2},
\end{equation}
\begin{equation}
\label{Zher_eq7}
\frac{dx}{dy} = -\frac{p}{y}.
\end{equation}
\end{lemma}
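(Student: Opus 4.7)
My plan is to exploit the interior first-order maximality condition
$$\frac{\partial H}{\partial \lambda} = 0 \;\Longleftrightarrow\; ((x-\lambda)^2 - y^2)\Psi_1 - 2(x-\lambda)y\Psi_2 = 0,$$
and to substitute this identity into the adjoint and phase systems in order to extract two conservation laws on $[t_1,t_2]$ whose combination is precisely part (a).

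First I would note that the bracket on the right-hand side of the equation for $d\Psi_1/dt$ in \eqref{Zher_eq4} coincides exactly with the expression on the right above. Hence on $[t_1,t_2]$ we get $d\Psi_1/dt \equiv 0$, i.e.\ $\Psi_1$ is constant. Next, \eqref{Zher_eq5} rewrites the maximum condition as $\lambda(t) = x(t) - p(t)$ with $p(t) := y\Psi_1/(R - \Psi_2)$ and $R := \sqrt{\Psi_1^2 + \Psi_2^2}$; equivalently, $y\Psi_1 = p(R-\Psi_2)$. Substituting $x-\lambda = p$ into \eqref{Zher_eq2} and \eqref{Zher_eq4} yields $\dot y = -2y/(p^2+y^2)$, $\dot x = 2p/(p^2+y^2)$, and, using $\Psi_1\dot\Psi_1 + \Psi_2\dot\Psi_2 = R\dot R$ together with the identity $2py\Psi_1 = 2p^2(R-\Psi_2)$, a short computation gives
$$\dot R - \dot\Psi_2 = -\frac{2(R-\Psi_2)}{p^2+y^2}.$$
Combining this with $\dot y = -2y/(p^2+y^2)$ and differentiating the quotient, I would then read off $\frac{d}{dt}[(R-\Psi_2)/y] \equiv 0$ on $[t_1,t_2]$, so $(R-\Psi_2)/y$ is also constant.

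With these two conservation laws in hand, the assertions of the lemma follow quickly. Part (a) is immediate, since $p = \Psi_1 \cdot y/(R-\Psi_2)$ is the ratio of the constant $\Psi_1$ and the constant $(R-\Psi_2)/y$, hence constant. Part (b) is then a rewriting of \eqref{Zher_eq5}. For part (c), substituting $x(t) - \lambda(t) \equiv p$ with constant $p$ into \eqref{Zher_eq2} produces \eqref{Zher_eq6} directly as the equation for $y$, and dividing $\dot x = 2p/(p^2+y^2)$ by $\dot y = -2y/(p^2+y^2)$ yields $dx/dy = -p/y$, which is \eqref{Zher_eq7}. The main technical hurdle I anticipate is the algebraic verification that $(R-\Psi_2)/y$ is conserved; the cancellations hinge on the identity $y\Psi_1 = p(R-\Psi_2)$, which allows every occurrence of $\Psi_1$ in the numerator of $\dot R - \dot\Psi_2$ to be traded for a factor of $(R-\Psi_2)$ and pulled out.
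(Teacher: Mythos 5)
Your proposal is correct and follows essentially the same route as the paper: both arguments rest on the two conservation laws $\Psi_1\equiv\mathrm{const}$ (from the vanishing of the bracket in the $\dot\Psi_1$ equation at the critical point of $H$) and $(\sqrt{\Psi_1^2+\Psi_2^2}-\Psi_2)/y\equiv\mathrm{const}$, whose ratio gives the constancy of $p$. The only difference is that the paper obtains the second conservation law by noting that this quantity equals the maximized Hamiltonian and invoking its well-known constancy along extremals of autonomous systems, whereas you verify it by direct differentiation (your identity $\dot R-\dot\Psi_2=-2(R-\Psi_2)/(p^2+y^2)$ checks out), which makes the argument slightly more self-contained.
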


\begin{proof}
Since $\lambda(t)$ maximizes $H$ on $\mathds{R}$, it satisfies \eqref{Zher_eq5} for $t \in [t_1,\, t_2]$.
By substituting \eqref{Zher_eq5} into \eqref{Zher_eq2}-\eqref{Zher_eq4} we obtain
\begin{equation}
\label{Zher_eq8} \frac{dx}{dt} = \frac{\Psi_1}{y\sqrt{\Psi_1^2 + \Psi_2^2}},
\end{equation}
\begin{equation}
\label{Zher_eq9} \frac{dy}{dt} = -\frac{\sqrt{\Psi_1^2 + \Psi_2^2}-\Psi_2}{y\sqrt{\Psi_1^2 + \Psi_2^2}}.
\end{equation}
\begin{equation}
\label{Zher_eq10} \frac{d\Psi_1}{dt} = 0,\, \frac{d\Psi_2}{dt} = \frac{\sqrt{\Psi_1^2 + \Psi_2^2}-\Psi_2}{y^2}.
\end{equation}
\begin{equation*}
H(x,y,\Psi_1, \Psi_2, \lambda_0) = \frac{\sqrt{\Psi_1^2 + \Psi_2^2}-\Psi_2}{y}.
\end{equation*}
In view of \eqref{Zher_eq10} we have 
$$\Psi_1(t) \equiv const = c_1,\,t \in [t_1, t_2].$$
Due to a well-known property of the Hamilton function we have
\begin{equation}
\label{Zher_eq11}
H(x, y, \Psi_1, \Psi_2, \lambda) = \frac{\sqrt{\Psi_1^2 + \Psi_2^2}-\Psi_2}{y} \equiv const = c_2,\, t \in [t_1, t_2].
\end{equation}

We put $p = \frac{c_1}{c_2}$. Thus, we proved statements (a) and (b). Using \eqref{Zher_eq11} we can rewrite \eqref{Zher_eq9} as \eqref{Zher_eq6}. By dividing \eqref{Zher_eq8} by \eqref{Zher_eq9} we obtain the differential equation \eqref{Zher_eq7}.
\end{proof}

We note that equations of the Hamiltonian system \eqref{Zher_eq2}-\eqref{Zher_eq4} are invariant under changing the sign of $\Psi_1(t)$, $x(t)$ and $\lambda(t)$ to the opposite. Thus, flipping the sign of $\Psi_1(t)$ (and, due to the statement (a) of Lemma 1, equally of $p$) has the effect of reflecting the phase trajectory $(x(t), y(t))$ in the imaginary axis. Therefore, we can restrict ourselves to the case of $p \ge 0$. We will see that this choice will lead us to the right half of the boundary of $D_c(T)$.

In the case of no restrictions on the driving function $\lambda(t)$ we have $c=\infty$ and the condition $\lambda_0 \in [-c, c]$ always holds. This allows us to deduce from Lemma 1 a description of the boundary of $D(T)$ in the Cartesian coordinates $(X,Y)$.

\begin{theorem}
The boundary of the domain $D(T),\, T > 0$ is given by the equation
\begin{equation}
\label{Zher_eq12}
2X^2 = \log Y(1-4T-Y^2).
\end{equation}
\end{theorem}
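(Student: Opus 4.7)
The plan is to observe that with $c=\infty$ the condition $\lambda_0\in[-c,c]$ holds automatically, so Lemma 1 applies on the entire interval $[0,T]$ for every trajectory delivering a boundary point of $D(T)$. By the symmetry discussion after Lemma 1 I restrict attention to $p\ge0$; this will yield the right half of $\partial D(T)$, and the full boundary follows from the symmetry of $D(T)$ in the imaginary axis, consistent with the invariance of \eqref{Zher_eq12} under $X\mapsto -X$.

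I would then integrate the two ODEs \eqref{Zher_eq6} and \eqref{Zher_eq7} with initial data $x(0)=0$, $y(0)=1$. Separating variables in \eqref{Zher_eq7} immediately yields $X=-p\log Y$, where $(X,Y)=(x(T),y(T))$ denotes the endpoint of the trajectory. Separating variables in \eqref{Zher_eq6} and integrating from $t=0$ to $t=T$,
$$\int_1^Y -\frac{p^2+y^2}{2y}\,dy \;=\; T, \qquad\text{i.e.}\qquad -\frac{p^2}{2}\log Y-\frac{Y^2-1}{4}=T,$$
which rearranges to $2p^2\log Y = 1-4T-Y^2$.

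The final step is to eliminate $p$ between these two relations. From $X=-p\log Y$ one has $p^2=X^2/(\log Y)^2$, so substituting into $2p^2\log Y = 1-4T-Y^2$ gives $2X^2/\log Y = 1-4T-Y^2$, which is exactly \eqref{Zher_eq12}. As $p$ ranges over $[0,\infty)$ one sweeps from $(0,\sqrt{1-4T})$ (when $T\le 1/4$) up to the endpoint $i$ (as $p\to\infty$, $\log Y=-2T/p^2+O(p^{-4})$ and $X=2T/p+o(p^{-1})\to 0$), which is consistent with Theorem 1.

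There is essentially no difficult step: once the scope of Lemma 1 is correctly identified, the argument reduces to two routine separable integrations and an algebraic elimination. The only subtlety worth recording is that in the unconstrained case every boundary trajectory maximises $H$ at the interior critical point $\lambda_0$ throughout $[0,T]$; this is immediate because the global supremum of $H$ over $\mathds{R}$ is attained at $\lambda_0$, so no switching to a boundary of $[-c,c]$ can occur and the single constant $p$ governs the whole trajectory.
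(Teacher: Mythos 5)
Your proposal is correct and follows the paper's own argument essentially verbatim: apply Lemma~1 on all of $[0,T]$ (since $c=\infty$ makes $\lambda_0\in[-c,c]$ automatic), integrate \eqref{Zher_eq6} and \eqref{Zher_eq7} to get $2p^2\log Y+Y^2=1-4T$ and $X=-p\log Y$, and eliminate $p$ to obtain \eqref{Zher_eq12}. The extra remarks on symmetry and the $p\to\infty$ asymptotics are accurate but not needed.
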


\begin{proof}
Since conditions of Lemma 1 are satisfied on the whole interval $[0, T]$, we can integrate the equations \eqref{Zher_eq6} and \eqref{Zher_eq7} over this interval with the conditions $x(0)=0,\, y(0)=1,\, x(T)=X,\, y(T)=Y$. We obtain
\begin{equation}
\label{Zher_eq13}
2p^2\log Y + Y^2 = 1-4T,\, X = - p \log Y.
\end{equation}
Finally, multiplying the first of these equations to $\log Y$ and using the second we obtain \eqref{Zher_eq12}.
\end{proof}

\begin{figure}
\centering
    \begin{subfigure}[h]{0.495\textwidth}
        \epsfig{file=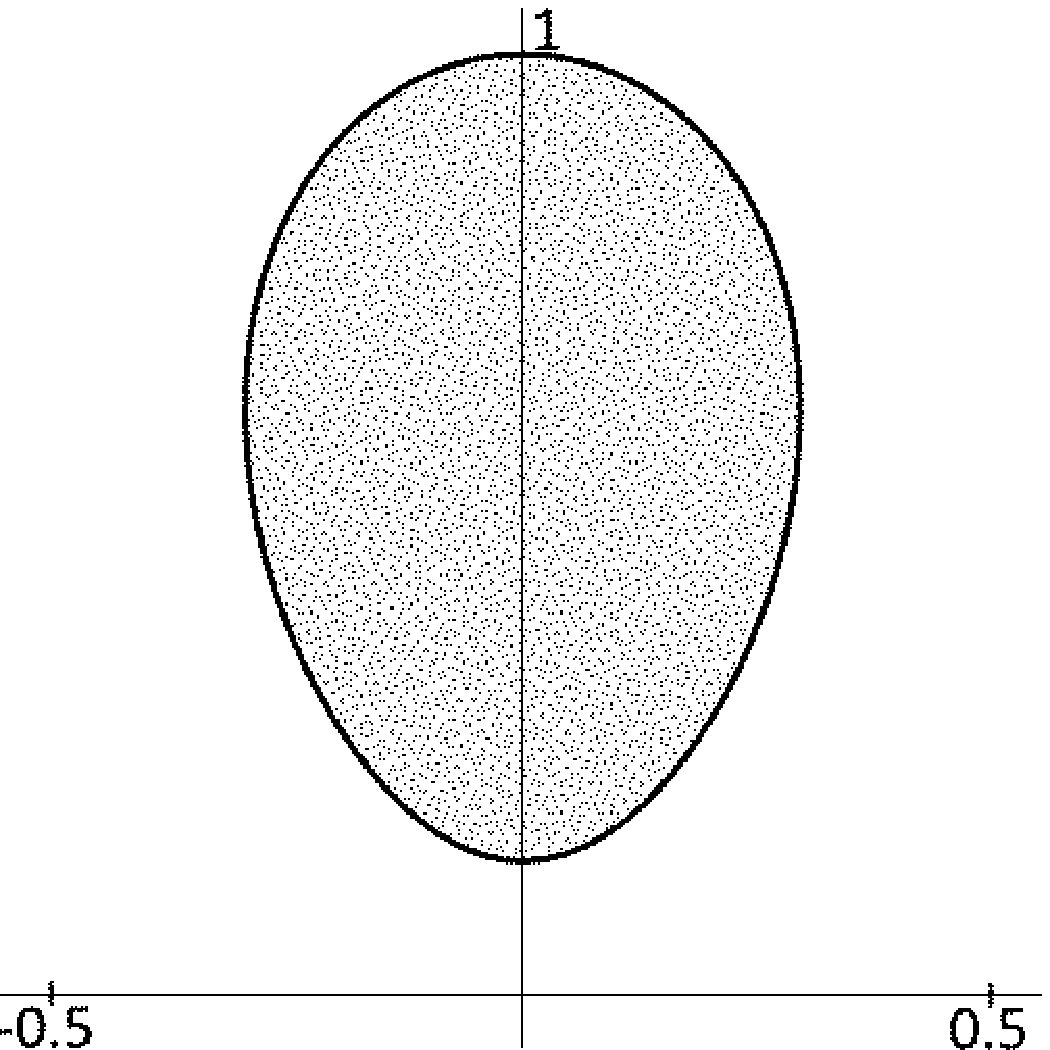, height=1.7in, width=1.7in}
        \caption{T=0.245}
    \end{subfigure}
        \centering
        \hspace{-12mm}
        \begin{subfigure}[h]{0.495\textwidth}
        \epsfig{file=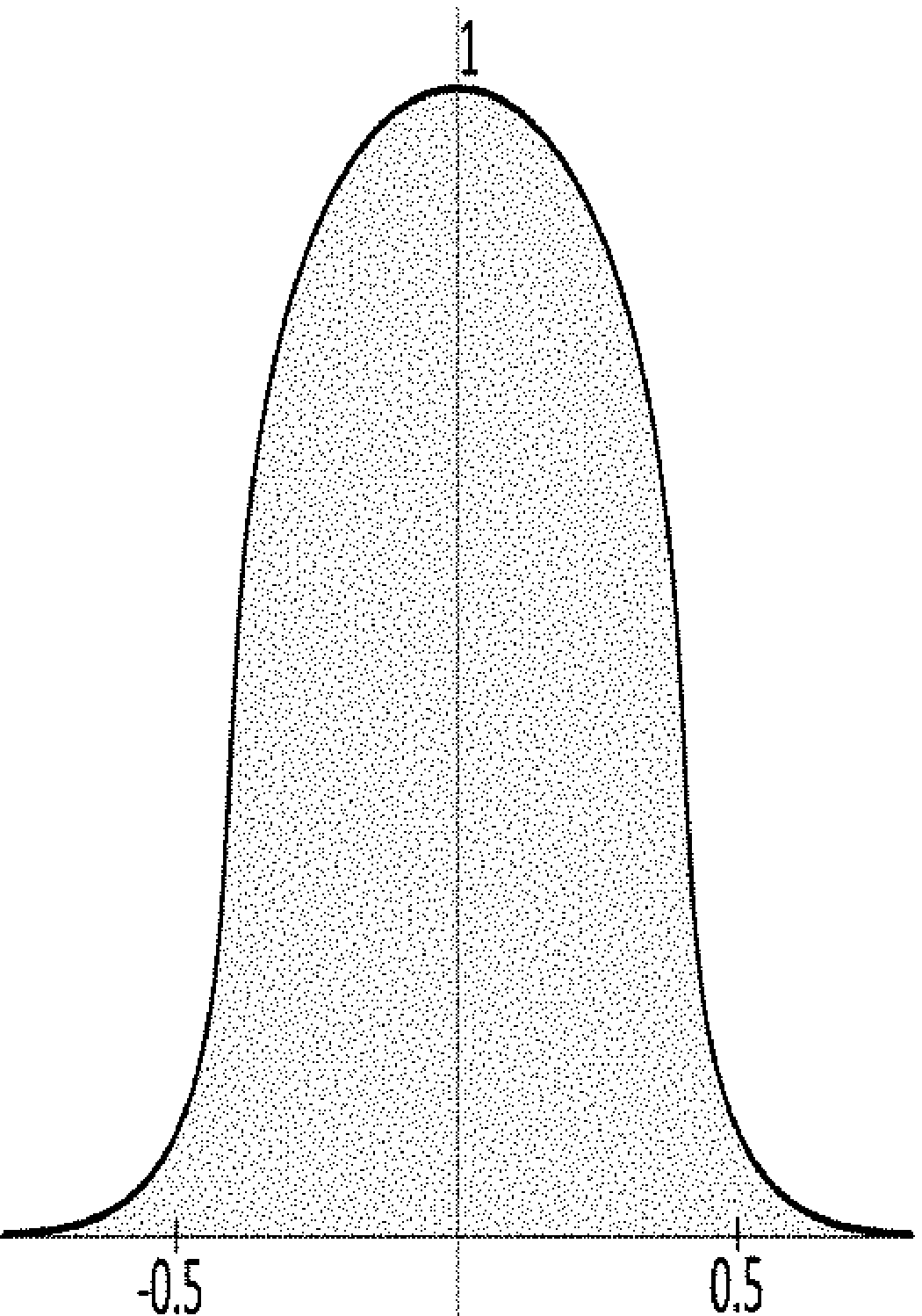, height=1.7in, width=2.43in}
        \caption{T=0.3}
    \end{subfigure}
\caption{Value ranges $D(T)$}
\label{Zher_fig1}
\end{figure}

It is easy to see that there are two essentially different cases. In the case of $T \le \frac{1}{4}$ the set $D(T)$ is a bounded domain with its boundary crossing the imaginary axis at $y=\sqrt{1-4T},\, y=1$. This case corresponds to Theorem 1. If $T > \frac{1}{4}$ the set $D(T)$ is unbounded and its boundary includes the real axis, this case corresponds to Theorem 2. Starting at this point, we only consider the case of $T \le \tfrac{1}{4}$.

 Note that the boundary point $(0, \sqrt{1-4T})$ of $D(T)$ is delivered
by the driving function $\lambda(t)\equiv 0$. Therefore, it also belongs to the boundary of $D_c(T)$.
It is a reasonable assumption that all points of some arc on $\partial D(T)$ near $(0, \sqrt{1-4T})$ are delivered by driving functions
with ranges within the interval $[-c, c]$, and since this arc belongs to $\partial D_c(T)$.
A precise statement is given by the following lemma.

\begin{lemma}
A segment of the boundary $\partial D_c(T)$ is given by \eqref{Zher_eq12}, $Y \in [1-4T, Y_0]$, $Y_0$ is the unique solution of one of the equations
\begin{equation}
\label{Zher_eq14}
2c^2\log Y + Y^2 = 1 - 4T,\, c^2 \ge T - \frac{1-e^{-4}}{4},
\end{equation}
\begin{equation}
\label{Zher_eq15}
\frac{2c^2\log Y}{(1 + \log Y)^2} + Y^2 = 1 - 4T,\, c^2 \le T - \frac{1-e^{-4}}{4}.
\end{equation}
\end{lemma}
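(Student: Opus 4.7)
The plan is to apply Lemma~1 on the whole interval $[0,T]$, since on the arc in question $\lambda_0(t)\in[-c,c]$ holds throughout, and then to track when the resulting driving function actually saturates the bound $|\lambda|=c$. Integrating \eqref{Zher_eq7} with $x(0)=0$, $y(0)=1$ gives $x(t)=-p\log y(t)$, and combining this with statement (b) of Lemma~1 yields the explicit formula
\begin{equation*}
\lambda(t)=x(t)-p=-p\bigl(1+\log y(t)\bigr),\qquad t\in[0,T].
\end{equation*}
Since \eqref{Zher_eq6} implies $y(t)$ is strictly decreasing from $1$ to $Y=y(T)$, the function $1+\log y(t)$ is also strictly decreasing (from $1$ to $1+\log Y$). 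Hence $|\lambda(t)|$ attains its maximum on $[0,T]$ at one of the endpoints, and a case split on the sign of $1+\log Y$ shows
\begin{equation*}
\max_{t\in[0,T]}|\lambda(t)|=\begin{cases}p,&Y\ge e^{-2},\\[2pt] -p(1+\log Y),&Y\le e^{-2}.\end{cases}
\end{equation*}

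Next, I parameterize the candidate boundary arc by $p\ge0$; the pair $(X,Y)$ obtained from Lemma~1 on $[0,T]$ satisfies the constraint \eqref{Zher_eq13}, namely $2p^2\log Y+Y^2=1-4T$ and $X=-p\log Y$, which as shown in Theorem~3 is equivalent to \eqref{Zher_eq12}. As $p$ increases from $0$, the endpoint $(X,Y)$ traces the curve \eqref{Zher_eq12} starting from $(0,\sqrt{1-4T})$, and the trajectory lies in the admissible class $\{|\lambda|\le c\}$ precisely while $\max_t|\lambda(t)|\le c$. The critical value $p_0$ (and the corresponding $Y_0$) is therefore determined by equality, which by the dichotomy above yields two cases: either $p_0=c$ together with $2c^2\log Y_0+Y_0^2=1-4T$ (when $Y_0\ge e^{-2}$), giving \eqref{Zher_eq14}; or $-p_0(1+\log Y_0)=c$, i.e. $p_0=-c/(1+\log Y_0)$, substituted into $2p_0^2\log Y_0+Y_0^2=1-4T$ to give \eqref{Zher_eq15} (when $Y_0\le e^{-2}$).

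To determine which branch applies for given $c,T$, I set $Y_0=e^{-2}$ in either equation; both reduce to $-4c^2+e^{-4}=1-4T$, i.e.\ $c^2=T-(1-e^{-4})/4$. Monotonicity in $c$ of the relation between the threshold and $Y_0$ then identifies the regimes $c^2\ge T-(1-e^{-4})/4$ with \eqref{Zher_eq14} and $c^2\le T-(1-e^{-4})/4$ with \eqref{Zher_eq15}. Uniqueness of $Y_0$ in each case follows by checking that the left-hand sides of \eqref{Zher_eq14} and \eqref{Zher_eq15} are strictly monotone in $Y$ on the relevant interval: for \eqref{Zher_eq14} the derivative is $2c^2/Y+2Y>0$, and for \eqref{Zher_eq15} a short computation shows monotonicity on $(0,e^{-2}]$.

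The main subtlety I expect is justifying that every point of the arc for $p\in[0,p_0]$ genuinely belongs to $\partial D_c(T)$, not merely to the larger boundary $\partial D(T)\cap\overline{D_c(T)}$. This is because for $p<p_0$ the trajectory is interior-admissible and delivers a boundary point of $D(T)\supset D_c(T)$, hence a fortiori a boundary point of $D_c(T)$; while at $p=p_0$ the driving function first saturates $|\lambda|=c$, marking where the arc of $\partial D(T)$ detaches from $\partial D_c(T)$. The rest of the argument is the algebraic manipulation already carried out above.
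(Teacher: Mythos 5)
Your strategy is essentially the paper's: the extremal control for $\partial D(T)$ is $\lambda(t)=x(t)-p=-p(1+\log y(t))$, it is increasing in $t$, so admissibility reduces to the two endpoint conditions $p^2\le c^2$ and $p^2(1+\log Y)^2\le c^2$, i.e. $f_1(Y)\le c^2$ and $f_2(Y)\le c^2$ with $f_1(Y)=\frac{1-4T-Y^2}{2\log Y}$ and $f_2(Y)=f_1(Y)(1+\log Y)^2$, and the crossover at $Y=e^{-2}$ with threshold $c^2=T-\frac{1-e^{-4}}{4}$ comes out the same way. The gap is in the final step. That the admissible set of $Y$ is a full interval $[\sqrt{1-4T},Y_0]$, that $Y_0$ is unique, and that the sign of $c^2-(T-\frac{1-e^{-4}}{4})$ selects between \eqref{Zher_eq14} and \eqref{Zher_eq15} all hinge on $f_2$ being increasing on $[\sqrt{1-4T},e^{-2}]$, so that $\max f_2=f_2(e^{-2})=T-\frac{1-e^{-4}}{4}$. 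This is the only nontrivial computation in the paper's proof (it shows $f_2'$ cannot vanish there by comparing the two sides of $2\frac{\log Y+1}{\log Y-1}=\frac{1-4T}{Y^2}-1$). You replace it with two unsupported assertions: ``monotonicity in $c$ of the relation between the threshold and $Y_0$'' (which itself presupposes the monotonicity in $Y$ you have not proved) and ``a short computation shows monotonicity [of the left-hand side of \eqref{Zher_eq15}] on $(0,e^{-2}]$''.

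The second assertion is false. Writing $g(Y)=\frac{2c^2\log Y}{(1+\log Y)^2}+Y^2$, one has $g'(Y)=\frac{2c^2}{Y}\cdot\frac{1-\log Y}{(1+\log Y)^3}+2Y$. The first term is negative for $Y<e^{-1}$ and behaves like $-\frac{2c^2}{Y(\log Y)^2}\to-\infty$ as $Y\to0^+$, while $g'(e^{-2})=-6e^2c^2+2e^{-2}>0$ for every $c$ in the regime of \eqref{Zher_eq15} (where $c^2\le e^{-4}/4$). Concretely, for $T=0.2499$ and $c^2=0.004$ one finds $g'(\sqrt{1-4T})=g'(0.02)<0<g'(e^{-2})$, so $g$ is not monotone on the relevant interval $[\sqrt{1-4T},e^{-2}]$. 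Since $g(Y)<1-4T$ is pointwise equivalent to $f_2(Y)<c^2$ on that interval, monotonicity of $g$ \emph{would} have sufficed for both uniqueness of $Y_0$ and admissibility of the whole arc below it --- but $g$ does not have it, whereas $f_2$ does. You need to supply the paper's monotonicity argument for $f_2$ (or an equivalent substitute); with that in place the rest of your proof is correct.
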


Note that if $c^2 = T - \tfrac{1-e^{-4}}{4}$ both equations \eqref{Zher_eq14}, \eqref{Zher_eq15} have the same root $Y_0 = e^{-2}$.

\begin{proof}
Consider a point on the boundary $\partial D(T)$. Let $\lambda_0(t)$ denote the driving function delivering this point.
By Lemma 1 we have $\lambda_0(t) = x(t) - p$. Since  $p > 0$, we can see from \eqref{Zher_eq8} that $x(t)$ and, hence, $\lambda_0(t)$ are increasing functions.

A boundary point of $D(T)$ belongs to the boundary of $D_c(T)$ if it is delivered by a driving function with the range within $[-c, c]$.
Since $\lambda_0(t)$ is increasing this condition is equivalent to inequalities 
\begin{equation}
\label{Zher_eq16}
\lambda_0(0) \ge -c,\, \lambda_0(T) \le c.
\end{equation}
We note that $\lambda_0(0) = -p,\, \lambda_0(T) = X - p$. Equations \eqref{Zher_eq13} allow us to express $X$ and $p$ through $Y$. Substituting it into \eqref{Zher_eq16} and squaring the result we obtain
$$
\frac{1-4T-y^2}{2 \log y} \le c^2,\, \frac{1-4T-y^2}{2 \log y}(1 + \log y)^2 \le c^2.
$$
We need to find the greatest value $Y_0$ of $Y$ satisfying both conditions. Define the following functions of $Y$ for $Y \in [\sqrt{1-4T}, 1]$
$$f_1(Y) = \frac{1-4T-Y^2}{2 \log Y},\, f_2(Y)=\frac{1-4T-Y^2}{2 \log Y}(1 + \log Y)^2.$$
It is easy to see that $f_1(Y) \ge f_2(Y)$ for $Y \in [e^{-2}, 1]$, in particular, it is always true if
$\sqrt{1-4T} \ge e^{-2}$ or, which is the same, $T - \tfrac{1-e^{-4}}{4} \le 0$. Therefore, in this case
$Y_0$ is the solution of $f_1(Y) = c^2$ which is equivalent
to \eqref{Zher_eq14} and it remains to prove the case of $\sqrt{1-4T} < e^{-2}$.

We have $f_1(Y) \le f_2(Y)$ and the equality sign holds only at $Y=e^{-2}$. Hence, we need to check if $f_2$ attains the 
value $c^2$ within the interval $[\sqrt{1-4T}, e^{-2}]$. The derivative
$$f_2'(Y) = \frac{1 + \log Y}{2 (\log Y)^2}(-2Y(1+\log Y)\log Y + \frac{\log Y - 1}{Y}(1-4T-Y^2))$$
vanishes at $Y = e^{-1}$ and at roots of the equation
$$2\frac{\log Y + 1}{\log Y - 1} = \frac{1-4T}{Y^2} - 1.$$
The left-hand side of the equation is an increasing function of $Y$ on $[\sqrt{1-4T}, e^{-2}]$ and takes the value
$-\tfrac{4}{3}$ at $Y=e^{-2}$, while the right-hand side is decreasing on $[\sqrt{1-4T}, e^{-2}]$ and takes the value
$\tfrac{1-4T}{e^{-4}} - 1 > -1$ at $Y=e^{-2}$. Therefore, the derivative $f_2'$ does not vanish on the interval $[\sqrt{1-4T}, e^{-2}]$.
Since $f_2'(\sqrt{1-4T}) > 0$, $f_2$ increases on $[\sqrt{1-4T}, e^{-2}]$. Therefore, $f_2$ attains its maximum at $Y=e^{-2}$.
Hence, $Y_0$ is the solution of $f_2(Y) = c^2$ if the inequality $f_2(e^{-2}) > c^2$ holds. We note that the last inequality
gives $c^2 > T - \frac{1-e^{-4}}{4}$ to complete the proof.
\end{proof}

If $\lambda(t)\equiv\pm c$, the phase system \eqref{Zher_eq2} can be integrated directly. We need the following properties of its solutions stated by the lemma below.

\begin{lemma}
If trajectory $(x(t),y(t))$ satisfies
$$\frac{dx}{dt} = \frac{2(x-a)}{(x-a)^2 + y^2},\, \frac{dy}{dt} = -\frac{2y}{(x-a)^2 + y^2},$$
where $a$ is a real number, then the following quantities are constant
\begin{equation*}
(x-a)y,\, (x-a)^2-y^2-4t.
\end{equation*}
\end{lemma}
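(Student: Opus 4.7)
The plan is to verify both conservation laws by direct differentiation, which is mechanical given the explicit form of the ODE. The cleanest route, however, is to note that the real system is just the real and imaginary parts of the scalar complex Loewner equation
\begin{equation*}
\frac{dg}{dt}=\frac{2}{g-a},\qquad g(t)=x(t)+iy(t),
\end{equation*}
with a constant real driving term $\lambda\equiv a$. For such an equation I would observe that
\begin{equation*}
\frac{d}{dt}(g-a)^{2}=2(g-a)\,\frac{dg}{dt}=4,
\end{equation*}
so $(g-a)^{2}-4t$ is a complex conserved quantity. Expanding $(g-a)^{2}=(x-a)^{2}-y^{2}+2i(x-a)y$ and separating real and imaginary parts then yields that $(x-a)^{2}-y^{2}-4t$ and $(x-a)y$ are both independent of $t$, which is exactly the assertion.

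If a reader prefers to avoid the complex viewpoint, I would present the same computation in real form. Differentiating the product gives
\begin{equation*}
\frac{d}{dt}\bigl[(x-a)y\bigr]=\dot{x}\,y+(x-a)\dot{y}=\frac{2(x-a)y}{(x-a)^{2}+y^{2}}-\frac{2(x-a)y}{(x-a)^{2}+y^{2}}=0,
\end{equation*}
and differentiating the second quantity gives
\begin{equation*}
\frac{d}{dt}\bigl[(x-a)^{2}-y^{2}-4t\bigr]=2(x-a)\dot{x}-2y\dot{y}-4=\frac{4\bigl[(x-a)^{2}+y^{2}\bigr]}{(x-a)^{2}+y^{2}}-4=0.
\end{equation*}

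There is essentially no obstacle here: both identities fall out as soon as one substitutes the right-hand sides of the ODE, because the denominator $(x-a)^{2}+y^{2}$ cancels in exactly the way needed. The only thing to mention for completeness is that the statement is valid on any interval where the trajectory stays in $\mathds{H}$, so that the denominator does not vanish; this is automatic under the standing assumption that $(x(t),y(t))$ is a solution of the phase system in $\mathds{H}$.
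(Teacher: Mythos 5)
Your proof is correct and matches the paper's approach: the paper simply states that the lemma ``can be proved by direct integration of the system,'' which is exactly the computation you carry out. Both your complex reformulation via $\frac{d}{dt}(g-a)^2=4$ and the real-variable differentiation are valid and complete the verification the paper leaves to the reader.
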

\begin{proof}
The statement can be proved by direct integration of the system.
\end{proof}

{\bf 3. Main Theorem.} Now we are ready to prove the following theorem describing the value range $D_c(T)$ in the case of $c^2 \ge T - \frac{1-e^{-4}}{4}$. 

\begin{theorem}
Let $c^2 \ge T - \frac{1-e^{-4}}{4}$, $T \le \frac{1}{4}$ and let curves $l_1 - l_4$ be defined as follows.

1. The curve $l_1$ is a segment of the boundary $\partial D(T)$ given by \eqref{Zher_eq12}, $Y \in [1-4T, Y_0]$, $Y_0$ is the unique solution of \eqref{Zher_eq14}.

2. The curve $l_2$ is given by solutions $(X,Y)$, $X+iY=z$, $\mu \in [0, 1]$ of the equation
\begin{equation}
\label{Zher_eq17}
z^2 + 1 - 2c(2\mu -1)(z - i) + 8\mu c^2(\mu - 1) \ln\frac{z+c(2\mu-1)}{i+c(2\mu-1)} = 4T.
\end{equation}

3. The curve $l_3$ is given by solutions (X,Y) of the system
\begin{equation}
\label{Zher_eq18}
\left\{
\begin{aligned}
&2p^2\log \frac{Y p}{c} + Y^2-p^2 = 1-4T-c^2,\\
&X = -c + p(1-\log\frac{Y p}{c}),
\end{aligned}
\right.
\end{equation}
where $p \in [c, p_0]$ and
\begin{equation}
\label{Zher_eq19}
p_0 = \sqrt{\frac{1}{2}(\sqrt{(4T+c^2-1)^2+4c^2} + (4T+c^2-1))}.
\end{equation}
The curve $l_4$ is symmetric to $l_3$ with respect to the imaginary axis.

If the following equation 
\begin{equation}
\label{Zher_eq20}
-4pc + \frac{c^2}{p^2}\exp{(-\frac{4c}{p})} - p^2 = 1- 4T - c^2
\end{equation}
has two solutions $p_1 < p_2$ in the interval $(c,p_0)$ we also define curves $l_5-l_{10}$.

4. The curve $l_5$ is given by solutions (X,Y) of the system \eqref{Zher_eq18}, $p\in[c,p_1]$. The curve $l_6$ is symmetric to $l_5$ with respect to the imaginary axis.

5. The curve $l_7$ is given by solutions (X,Y) of the system
\begin{equation}
\label{Zher_eq21}
\left\{
\begin{aligned}
&4cp +(X-c)^2-Y^2-4T = c^2 - 1,\\
&- p\log\frac{(X-c)Y}{c} = 2c,
\end{aligned}
\right.
\end{equation}
where $p \in [p_1, p_2]$. The curve $l_8$ is symmetric to $l_7$ with respect to the imaginary axis.

6. The curve $l_9$ is given by solutions (X,Y) of \eqref{Zher_eq18}, $p\in[p_2,p_0]$. The curve $l_{10}$ is symmetric to $l_9$ with respect to the imaginary axis.

The following two cases are possible: 

(1) $D_c(T)$ is bounded by curves $l_1,l_2, l_5-l_{10}$, if \eqref{Zher_eq20} has two solutions $p_1 < p_2$ in the interval $(c,p_0)$.

(2) $D_c(T)$ is bounded by curves $l_1-l_4$, if \eqref{Zher_eq20} has less than two solutions in the interval $(c,p_0)$.
\end{theorem}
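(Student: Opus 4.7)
The plan is to apply the Pontryagin maximum principle, classify the admissible structures of the optimal control $\lambda_A(t)$, integrate each structure using Lemmas 1 and 3, and then assemble the resulting arcs into $\partial D_c(T)$.

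For Step 1, I would argue that since the Hamiltonian $H$ in \eqref{Zher_eq3} has a unique local maximum on $\mathbb{R}$ at $\lambda_0$ given by \eqref{Zher_eq5}, the optimal control satisfies $\lambda_A(t)\in\{-c,\lambda_0(t),+c\}$ for a.e.\ $t$. On any open subinterval where $\lambda_A=\lambda_0$, Lemma 1 forces $p$ to be constant and $\lambda_0=x-p$; moreover $dx/dt>0$ on every interior phase with $p>0$ (and on each $\lambda=\pm c$ phase meeting $l_3$/$l_7$, by the signs of $x+c$ and $x-c$ dictated by Lemma 3), so $\lambda_0$ is strictly increasing on each interior subinterval. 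Combining this with continuity of $\lambda_0$ in $(x,y,\Psi_1,\Psi_2)$ and the paper's restriction to $p\ge 0$, I would show that the right-half boundary trajectories must have one of four structures: (A) pure interior; (B) $\lambda_A=-c$ on $[0,t_1]$ followed by interior on $[t_1,T]$; (C) $\lambda_A=-c$ on $[0,t_1]$, interior on $[t_1,t_2]$, $\lambda_A=+c$ on $[t_2,T]$; (D) a purely-boundary trajectory that may switch between $-c$ and $+c$, in which case the MP forces the codimension-one switching condition $H(-c)=H(+c)$ at the switching instant.

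For Step 2, I would integrate each structure by combining Lemma 3 on the boundary phases with Lemma 1 on the interior phase and enforcing $\lambda$-continuity at switches, which gives $x(t_1)=p-c$ and $x(t_2)=p+c$. This yields $y(t_1)=c/p$, $4t_1=p^2-c^2/p^2-c^2+1$, $y(t_2)=(c/p)e^{-2c/p}$, and $4(t_2-t_1)=4cp+(c^2/p^2)(1-e^{-4c/p})$. Eliminating the intermediate data at $t=T$ produces: \eqref{Zher_eq12} (Lemma 2) for (A), giving $l_1$ with $p\in[0,c]$; \eqref{Zher_eq18} for (B), parameterized by $p\in[c,p_0]$, with $p=p_0$ from \eqref{Zher_eq19} corresponding to $t_1=T$ (pure $\lambda\equiv-c$), giving $l_3$; \eqref{Zher_eq21} for (C), parameterized by $p$, giving $l_7$; and \eqref{Zher_eq17} for (D), parameterized by $\mu\in[0,1]$ (where $a=c(2\mu-1)$ plays the role of the representative boundary level and the log term comes from integrating the adjoint system through the $H(-c)=H(+c)$ switch), giving $l_2$. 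The curves $l_4,l_6,l_8,l_{10}$ follow from the symmetry $(x,\Psi_1,\lambda)\mapsto(-x,-\Psi_1,-\lambda)$ noted in the paper.

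For Step 3, I would separate cases (1) and (2) via the admissibility of structure (C). The condition $t_2\le T$ rewrites, using the formulas for $t_1$ and $t_2-t_1$ above, as $p^2+4cp-(c^2/p^2)e^{-4c/p}\le 4T+c^2-1$, with equality exactly at \eqref{Zher_eq20}. If \eqref{Zher_eq20} has at most one root in $(c,p_0)$, (C) nowhere strictly improves on (B), and (B) governs the entire right flank, giving case (2) with $\partial D_c(T)=l_1\cup l_3\cup l_2\cup l_4$. If \eqref{Zher_eq20} has two roots $p_1<p_2$, (C) is admissible precisely for $p\in[p_1,p_2]$, and a direct comparison of the endpoint maps along (B) and (C) shows that $l_7$ lies strictly outside the middle portion of $l_3$, which thereby becomes interior to $D_c(T)$; the boundary is then $l_1\cup l_5\cup l_7\cup l_9\cup l_2\cup l_{10}\cup l_8\cup l_6$, with the arcs joining at $p=p_1,p_2$ by continuity of the endpoint map. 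The main obstacle I foresee is the structural classification in Step 1: rigorously excluding exotic controls (multiple interior arcs with different $p$, more than one bang-bang switch, or a reversal of direction on a boundary arc) requires a careful analysis of the adjoint dynamics along the $\lambda=\pm c$ phases, where Lemma 1 does not apply. A secondary obstacle is deriving the logarithmic form \eqref{Zher_eq17} for $l_2$, which does not follow directly from Lemma 1 and must be obtained by integrating the $\Psi$-system on the two boundary arcs and eliminating the switching time using $H(-c)=H(+c)$.
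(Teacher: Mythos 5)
Your overall strategy --- PMP, a structural classification of the optimal control into pure-interior, bang-then-interior, bang-interior-bang, and pure-bang regimes, integration via Lemmas 1 and 3 with the matching conditions $x(t_1)=p-c$ and $x(t_2)=p+c$, and a case split governed by whether the constraint $\lambda(T)\le c$ (equivalently $t_2\le T$) can fail, i.e.\ by the roots of \eqref{Zher_eq20} --- is the same as the paper's, and your intermediate formulas for $t_1$, $y(t_1)$, $y(t_2)$ and $t_2-t_1$ agree with \eqref{Zher_eq23} and \eqref{Zher_eq26}. Your worry about rigorously excluding exotic control structures (several interior arcs with different $p$, extra switches) is legitimate, but note that the paper does not resolve it either: it simply posits the structures \eqref{Zher_eq22} and \eqref{Zher_eq24} from the monotonicity of $\lambda_0$.

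The one step that would fail as written is your derivation of $l_2$. You propose to obtain \eqref{Zher_eq17} from a boundary trajectory with a switch between $-c$ and $+c$, ``integrating the adjoint system through the $H(-c)=H(+c)$ switch'' and eliminating the switching time. A trajectory with one (or finitely many) switches is a concatenation of arcs of the explicitly integrable $\lambda\equiv\mp c$ flows (Lemma 3), parameterized by the switching instants, and no elimination of those instants produces the $\mu$-dependent logarithm in \eqref{Zher_eq17}. The curve $l_2$ corresponds to the regime where the unconstrained maximizer $\lambda_0$ lies outside $[-c,c]$ and $H(-c)=H(+c)=\max_{[-c,c]}H$ holds on a whole interval, i.e.\ a chattering/singular regime; the boundary of the reachable set there is traced by the relaxed (convexified) system, which is exactly the generalized Loewner equation $\dot g = 2\mu/(g-c)+2(1-\mu)/(g+c)$ with a \emph{constant} weight $\mu\in[0,1]$. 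Writing $a=c(2\mu-1)$, integrating $\dot z=2(z+a)/(z^2-c^2)$ over $[0,T]$ and using $2(a^2-c^2)=8\mu c^2(\mu-1)$ gives \eqref{Zher_eq17}. So you need the relaxation/chattering-limit idea here; the single-switch adjoint computation you sketch will not produce $l_2$.
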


\begin{figure}
\centering
    \begin{subfigure}[h]{0.495\textwidth}
        \epsfig{file=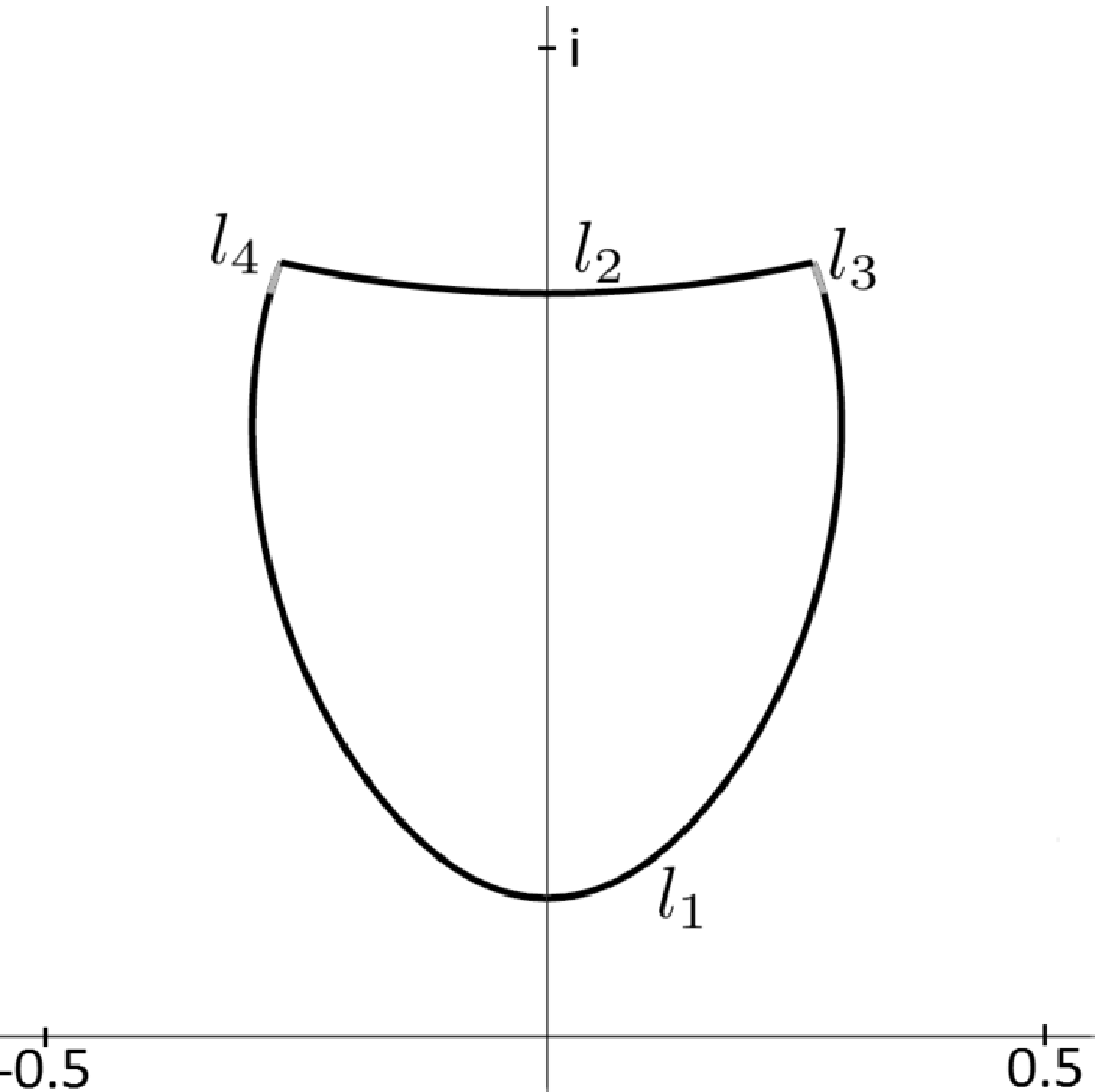, height=1.8in, width=1.78in}
        \caption{T=0.245, c=1}
    \end{subfigure}
    \centering
    \hspace{-10mm}
    \begin{subfigure}[h]{0.495\textwidth}
        \epsfig{file=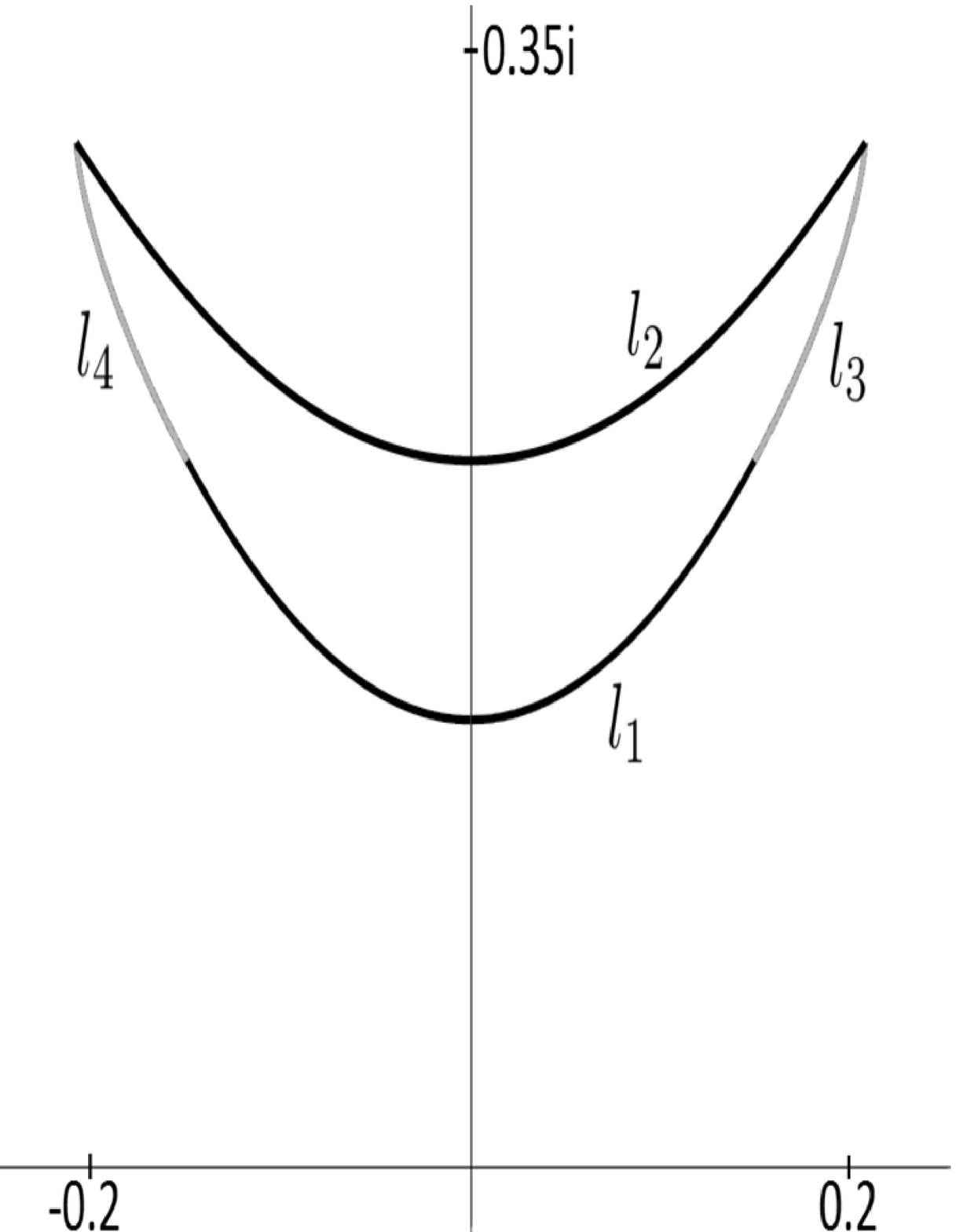, height=1.8in, width=2.3in}
        \caption{T=0.245, c=0.1}
    \end{subfigure}
\caption{The boundaries of the value ranges $D_c(T)$}
\label{Zher_fig2}
\end{figure}

\begin{proof}
The curve $l_1$ is already given by Lemma 2.
It can be seen from Lemma 1 that, at $t=0$, the Hamilton function $H$ is maximized at $\lambda_0 = -p$. Thus, if $p > c$, $H$ attains its maximum on $[-c, c]$ at $\lambda = -c$. Therefore, we have the following driving function:
\begin{equation}
\label{Zher_eq22}
\lambda(t) = \left\{
    \begin{aligned}
     &-c,\, &0 \le t \le t_1, \\
     &x(t)-p,\,  &t_1 < t \le T.
   \end{aligned}
   \right.
\end{equation}
Denote $x_1=x(t_1),\, y_1=y(t_1)$. Applying Lemma 3 to the interval $[0, t_1]$ we obtain
$$
(x_1 + c)y_1 = c, \, (x_1+c)^2 - y_1^2 - 4t_1 = c^2-1.
$$
Since $\lambda(t)$ is continuous, \eqref{Zher_eq22} gives $x_1 = p - c$. Thus, $y_1 = \frac{c}{p}$ and we can also find $t_1$:
\begin{equation}
\label{Zher_eq23}
4t_1 = p^2 - \frac{c^2}{p^2} - c^2 + 1.
\end{equation}
Integration of \eqref{Zher_eq6} and \eqref{Zher_eq7} over the interval $[t_1, T]$ yields the system \eqref{Zher_eq18}.
The equation \eqref{Zher_eq23} shows that $t_1$ increases as a function of $p$. Therefore, we can rewrite the condition $t_1 \in [0, T]$ as $p \in [c, p_0]$, where $c$ and $p_0$ are the roots of \eqref{Zher_eq23} for $t_1=0$ and $t_1=T$, respectively. Note that if $p=c$ equations \eqref{Zher_eq18} turn into \eqref{Zher_eq13}.

We have to satisfy the condition $\lambda(t) \in [-c, c]$. Since $\lambda(t)$ is equal to $-c$ on $[0, t_1]$ and increases on $[t_1, T]$, we only have to ensure
that $\lambda(T) \le c$. According to Lemma 2 for $p=c$ we have $\lambda(T) < c$. Assume that at some point $p \in (c, p_0]$, $\lambda(T) > c$. Then, due to continuity of $\lambda(T)$ as a function of $p$, there is a point $p_1 \in (c, p_0)$, such that $\lambda(T) = c$. Using \eqref{Zher_eq22} and \eqref{Zher_eq18} we can rewrite it as
$$
 -p\log\frac{Y p}{c} = 2c.
$$
With \eqref{Zher_eq18} it gives the equation \eqref{Zher_eq20} for $p_1$. Thus, if \eqref{Zher_eq20} has no roots in $(c, p_0)$ the case (2) takes place.  We note, however, the existing of a single root of \eqref{Zher_eq20} in $(c, p_0)$ does not guarantee a violation of the condition $\lambda(T) \le c$, and, thus, the case (2) is still possible.

Now consider the driving function
\begin{equation}
\label{Zher_eq24}
\lambda(t) = \left\{
    \begin{aligned}
     &-c,\, &0 \le t \le t_1, \\
     &x(t)-p,\,  &t_1 < t \le t_2,\\
     &c,\,  &t_2 < t \le T.
   \end{aligned}
   \right.
\end{equation}

\begin{figure}
\centering
        \epsfig{file=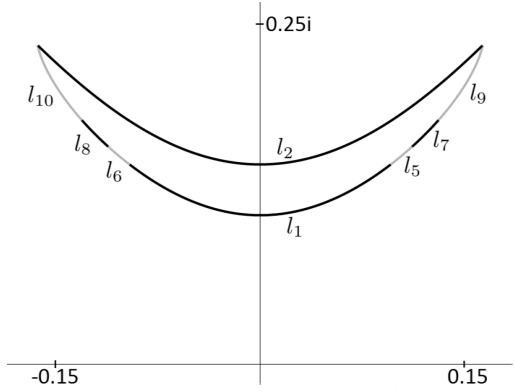, height=2in, width=2.64in}
\caption{The boundary of the value range $D_c(T), T=0.247, c=0.05$}
\label{Zher_fig3}
\end{figure}

Denote $x_2=x(t_2),\, y_2 = y(t_2)$. Applying Lemma 3 to the phase system for the interval $[t_2, T]$ we can write
\begin{equation}
\label{Zher_eq25}
(X-c)Y = (x_2-c)y_2,\, (X-C)^2 - Y^2 - 4T = (x_2-c)^2 - y_2^2 - 4t_2.
\end{equation}
The condition $\lambda(t_2) = c$ gives $x_2 = c + p$. Integrating \eqref{Zher_eq6} and \eqref{Zher_eq7} over the interval $[t_1, t_2]$ we obtain

$$
2p^2\log \frac{y_2 p}{c} + y_2^2-p^2 = 1-4t_2-c^2,\, x_2 = -c + p(1-\log\frac{y_2 p}{c}),
$$
that with \eqref{Zher_eq25} lead us to \eqref{Zher_eq21}.
These equations describe the boundary segment governed by the driving functions of the type \eqref{Zher_eq24}.

From \eqref{Zher_eq25} and \eqref{Zher_eq21} we can deduce the equation for $t_2$
\begin{equation}
\label{Zher_eq26}
4t_2 = 1 - c^2 + 4pc + p^2 - \frac{(X-c)^2 Y^2}{p^2},
\end{equation}
and, therefore, we have
$$
4(t_2-t_1) = 4pc + \frac{c^2 - (X-c)^2 Y^2}{p^2}.
$$
The second equation in \eqref{Zher_eq21} implies that $c^2 > (X-c)^2 Y^2$, therefore, the inequality $t_1 < t_2$ always holds. Since $t_1$ increases and takes the value $t_1=T$ at $p=p_0$, there is a point $p_2 \in [p_1, p_0]$, such that at this point $t_2 = T$. Substituting $t_2=T$ into \eqref{Zher_eq26} and using the second equation in \eqref{Zher_eq21} we again obtain the equation \eqref{Zher_eq20} for $p_2$. Thus, we see that existing of two roots of \eqref{Zher_eq20} $p_1 < p_2$ in the interval $[c, p_0]$ is a necessary condition for the case (1).

It is not difficult to see that the segment of the boundary corresponding to $p \in [p_2, p_0]$ is delivered by the driving functions of the type \eqref{Zher_eq22} and, consequently, is described by the system \eqref{Zher_eq18}.

For the remaining part of the boundary $\partial D_c(T)$ the Hamilton function is maximized outside of the interval $[-c, c]$ and, thus, we have $|\lambda(t)| = c$. Therefore, we can use the generalized Loewner equation (see \cite{Pro90, Pro93})
$$\frac{dg(z, t)}{dt} = \mu\frac{2}{g(z,t) - c} + (1- \mu)\frac{2}{g(z,t) + c},\, g(z,0) = z,\, \mu \in[0, 1].$$
Putting $z(t) = g(i, t)$ and integrating the equation over $[0, T]$ we obtain the equation \eqref{Zher_eq17} for the curve $l_2$, parameterized by $\mu\in[0, 1]$.
\end{proof}

\medskip
\noindent{\bf Acknowledgment.} This work was supported by the Russian Science Foundation (project no. 17-11-01229).


\address{
Saratov State University, 83 Astrakhanskaya str., Saratov, 410012, Russia\\
Petrozavodsk State University, 33 Lenin pr., Petrozavodsk, 185910, Russia}
\email{jerdevandrey@gmail.com}

\end{document}